\newtheorem{theorem}{Theorem}
\newtheorem{lemma}[theorem]{Lemma}
\newtheorem{proposition}[theorem]{Proposition}
\newenvironment{proof}[1][Proof]{\noindent\textbf{#1.} }{\ \rule{0.5em}{0.5em}}
\begin{document}

\date{}
\author{Halise Melis Ak\c{c}in and Ali Erdo\u{g}an }
\title{Some Results on Betti Series of Universal Modules of Differential
Operators}
\maketitle

\begin{abstract}
In this article, we discuss the rationality of the Betti series of $\Omega
_{n}(R_{m})$ where $\Omega _{n}(R_{m})$ denotes the universal module of $n$%
th order derivations of $R_{m}$. We proved that if $R$ is a coordinate ring
of an affine irreducible curve represented by $\frac{k[x_{1},x_{2},...,x_{s}]%
}{(f)}$ and if it has at most one singularity point, then the Betti series of $%
\Omega _{n}(R_{m})$ is rational where $m$ is a maximal ideal of $R$.

$\noindent $
\end{abstract}

%{\normalsize {\emph{$^{a,b}$ Department of Mathematics, Hacettepe
%University, 06800, Beytepe, Ankara,Turkey }} }

\let\thefootnote\relax\footnote{\emph{Key Words: universal differential operator
modules, minimal resolution }

\emph{Mathematics subject classification 2010: 13N05}}

%%%%%%%%%%%%%%%%%% SECTION 1 %%%%%%%%%%%%%%%%%%%%%%%%

\section{Introduction and Preliminaries}

The following notations will be fixed throughout the paper: ring means
commutative with identity and $R$ is a commutative $k$-algebra where $k$ is
an algebraically closed field of characteristic zero. \newline

An $n$th order $k$-derivation $D$ of $R$ into an $R$-module $F$ is an
element of $Hom_{k}(R,F)$ such that for any $n+1$ elements $r_0, r_1,\ldots,
r_{n}$ of $R$, the following identity holds:

\begin{center}
$D(r_0r_1\ldots r_{n})=\sum\limits_{i=1}^{n}(-1)^{i-1}
\sum\limits_{j_1<j_2<\ldots<j_{i}}r_{j_1}\ldots r_{j_{i}}D(r_0\ldots \hat{r}%
_{j_1}\ldots \hat{r}_{j_{i}}\ldots r_{n})$
\end{center}
where the hat over $r_{i}$'s means that it is missed. It can be
easily seen that a first order derivation is the ordinary
derivation of $R$ into an $R$-module $F$.

In \cite{Nak2}, a universal object for $n$th order derivations
constructed in the following way: Consider the exact sequence
\begin{equation*}
0\rightarrow I\rightarrow R\otimes _{k}R\text{ }\overset{\varphi }{%
\rightarrow }\text{ }R\rightarrow 0
\end{equation*}%
where $\varphi $ is defined as $\varphi
(\sum\limits_{i=1}^{n}a_{i}\otimes
b_{i})=\sum\limits_{i=1}^{n}a_{i}b_{i}$ for $a_{i},b_{i}\in R$ and
$I$ \ is the kernel of $\varphi$ . It is known that $\ker \varphi$
is generated by the set

\begin{center}
\{$1\otimes r-r\otimes 1:r\in R$\}
\end{center}
as an $R$-module. Then the mapping $d_{n}$ from $R$ into
$I/I^{n+1}$ given by

\begin{equation*}
d_{n}(r)=1\otimes r-r\otimes 1+I^{n+1} \text{ and } d_{n}(1)=0
\end{equation*}%
is called the universal derivation of order $n$ that is, any $n$th order
derivation $D$ from $R$ into $F$ can be factored through $I/I^{n+1}$ where $F
$ is an $R$-module. Here, the $R$-module $I/I^{n+1}$ is called the universal
module of $n$th order derivations and is denoted by $\Omega _{n}(R)$.

Note that if $R$ is a finitely generated $k$-algebra, then $\Omega _{n}(R)$
is a finitely generated $R$-module. It is proved in \cite[Prop. 2]{Nak2}
that if $R=k[x_{1},...,x_{s}]$ is a polynomial algebra over $k$ with $s$
variables, then $\Omega _{n}(R)$ is a free $R$-module of rank $\binom{n+s}{s}%
-1$ with basis
\begin{equation*}
\left\{ d_{n}(x_{1}^{\alpha _{1}}x_{2}^{\alpha _{2}}...x_{s}^{\alpha _{s}}):%
\text{ }1\leq \alpha _{1}+\alpha _{2}+...+\alpha _{s}\leq n\right\}
\end{equation*}%
and in \cite[Theo. 9]{Nak2} that

\begin{center}
$\Omega _{n}(R)\otimes _{R}R_{S}\cong \Omega _{n}(R_{S})$
\end{center}
where $S$ is a multiplicatively closed subset of $R$.

A free resolution of $\Omega _{n}(R)$ where $R$ is a local $k$-algebra with
maximal ideal $m$ is called a minimal resolution if the followings are
satisfied:

\begin{equation*}
\ldots \rightarrow F_{2}\text{ }\overset{\partial _{2}}{\rightarrow }\text{ }%
F_{1}\overset{\partial _{1}}{\rightarrow }F_{0}\text{ }\overset{\varepsilon }%
{\rightarrow }\text{ }\Omega _{n}(R)\rightarrow 0
\end{equation*}%
$F_{i}$'s are free $R$-modules of finite rank for all $i$ and $\partial
_{n}(F_{n})\subseteq mF_{n-1}$ for all $n\geq 1$ (see \cite{Ku} for
definition).

Let $(R,m)$ be a local ring. The Betti series of $\Omega_{n}(R)$ is defined
to be the series

\begin{equation*}
B(\Omega _{n}(R),t)=\underset{i\geq 0}{\sum }\dim _{R/m}Ext^{i}(\Omega
_{n}(R),\frac{R}{m})t^{i}\text{ for all }n\geq 1.
\end{equation*}

\begin{lemma}
Let $R$ be a local ring with maximal ideal $m$ and $M$ be a finitely
generated $R$-module. Suppose that
\begin{equation*}
0\rightarrow F_{1}\overset{\partial }{\rightarrow }F_{0}\rightarrow
M\rightarrow 0
\end{equation*}%
is a minimal resolution of $M.$ Then $Ext^{1}(M,R/m)$ is not zero.
\end{lemma}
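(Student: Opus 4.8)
The plan is to compute $Ext^{1}(M,R/m)$ directly from the given two-term minimal resolution by applying the contravariant functor $Hom_{R}(-,R/m)$. Since $F_{0}$ and $F_{1}$ are free, hence projective, we have $Ext^{i}(F_{0},R/m)=Ext^{i}(F_{1},R/m)=0$ for all $i\geq 1$; therefore the long exact $Ext$-sequence attached to $0\rightarrow F_{1}\overset{\partial }{\rightarrow }F_{0}\rightarrow M\rightarrow 0$ collapses to
\begin{equation*}
0\rightarrow Hom(M,R/m)\rightarrow Hom(F_{0},R/m)\overset{\partial ^{*}}{\rightarrow }Hom(F_{1},R/m)\rightarrow Ext^{1}(M,R/m)\rightarrow 0,
\end{equation*}
so that $Ext^{1}(M,R/m)$ is identified with the cokernel of $\partial ^{*}$, where $\partial ^{*}(f)=f\circ \partial$.

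The key step is to observe that minimality forces $\partial ^{*}$ to be the zero map. Indeed, the defining condition $\partial (F_{1})\subseteq mF_{0}$ means that for any $R$-linear $f\colon F_{0}\rightarrow R/m$ the composite $f\circ \partial$ carries $F_{1}$ into $f(mF_{0})\subseteq m\cdot (R/m)=0$. Hence $\partial ^{*}=0$, and the exact sequence above yields $Ext^{1}(M,R/m)\cong Hom(F_{1},R/m)$.

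Finally, $F_{1}$ is a \emph{nonzero} free $R$-module: this is implicit in the hypothesis, for if $F_{1}$ were $0$ the sequence $0\rightarrow F_{0}\rightarrow M\rightarrow 0$ would already exhibit $M$ as free and the asserted conclusion would fail, so one reads the hypothesis as saying that the displayed minimal resolution is that of a non-free $M$. Writing $F_{1}\cong R^{r}$ with $r\geq 1$ then gives $Hom(F_{1},R/m)\cong (R/m)^{r}\neq 0$, whence $Ext^{1}(M,R/m)\neq 0$. I expect no genuine obstacle in this argument: the entire content is the collapse of the $Ext$-sequence (via projectivity of the $F_{i}$) together with the vanishing of $\partial ^{*}$, which is exactly where the minimality hypothesis is consumed. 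It is worth recording for later use in the paper that this computation in fact pins down $\dim _{R/m}Ext^{1}(M,R/m)=\operatorname{rank}F_{1}$, i.e. the first Betti number of $M$.
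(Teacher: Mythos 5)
Your argument is correct and is exactly the standard computation the paper leaves implicit (the lemma is stated there without proof): applying $Hom(-,R/m)$ to the resolution, projectivity of the $F_{i}$ collapses the long exact sequence, minimality $\partial(F_{1})\subseteq mF_{0}$ forces $\partial^{*}=0$, and hence $Ext^{1}(M,R/m)\cong Hom(F_{1},R/m)\cong (R/m)^{\operatorname{rank}F_{1}}\neq 0$. Your reading that $F_{1}\neq 0$ is implicitly assumed is the right one and is consistent with how the lemma is used later in the paper, where $F_{1}$ is free of rank $\binom{n+s-1}{s}>0$; your closing remark that $\dim_{R/m}Ext^{1}(M,R/m)=\operatorname{rank}F_{1}$ is precisely what makes the Betti series in the Theorem a polynomial, hence rational.
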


Dealing with $n$th order case presents extra difficulties. Let
$R=k[x,y,z]$ be a $k$-algebra with $z^2=x^3$ and $y^2=xz$. It is
shown in \cite[ex. 3.1.6 and 3.4.7]{Er3} that
$pd(\Omega_1(R))\leq1$, but $%
pd(\Omega_2(R))$ is not finite.

\bigskip In \cite{Er4}, the following proposition is proved:

\begin{proposition}
Let $R=k[x_{1}, \ldots, x_{s}]$ and $S=k[y_{1}, \ldots, y_{t}]$ be
polynomial algebras and let $I$ be an ideal of $R$ generated by elements $%
\{f_{1}, \ldots, f_{m}\}$. Assume that $R/I=k[x_{1}, \ldots, x_{s}]/ (f_{1},
\ldots, f_{m})$ is an affine $k$-algebra of dimension $s-m$ and $%
pd(\Omega_{2}(R/I))\leq 1$. Then
$pd(\Omega_{2}(R/I\otimes_{k}S))\leq 1$.
\end{proposition}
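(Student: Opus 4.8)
The plan is to combine three ingredients: a structural description of $\Omega _{2}$ of a tensor product, flat base change for projective dimension, and the fact that $\Omega _{1}$ of a reduced complete intersection has projective dimension at most one. Write $A=R/I$. Since $\dim A=s-m$ and $A$ is cut out by the $m$ elements $f_{1},\dots ,f_{m}$ inside the Cohen--Macaulay ring $R$, these elements form a regular sequence, so $A$ is a complete intersection and $I/I^{2}\cong A^{m}$ is free. Moreover $A\otimes _{k}S=A[y_{1},\dots ,y_{t}]$ is a polynomial extension of $A$, so the assertion is really a statement about how $\Omega _{2}$ behaves when one adjoins polynomial variables.

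First I would record the structural decomposition
\[
\Omega _{2}(A\otimes _{k}S)\;\cong\;\bigl(\Omega _{2}(A)\otimes _{k}S\bigr)\oplus\bigl(\Omega _{1}(A)\otimes _{k}\Omega _{1}(S)\bigr)\oplus\bigl(A\otimes _{k}\Omega _{2}(S)\bigr)
\]
as $A\otimes _{k}S$-modules (if one prefers to avoid splitting, the corresponding three-step filtration with these graded pieces already suffices for the argument). This is the degree-$2$ case of the behaviour of universal higher derivations under $\otimes _{k}$, and it can be read off from the presentation $\Omega _{2}(C)=J_{C}/J_{C}^{3}$ with $C=A\otimes _{k}S$: identifying $C\otimes _{k}C\cong (A\otimes _{k}A)\otimes _{k}(S\otimes _{k}S)$ and setting $\mathfrak a=J_{A}\cdot (C\otimes _{k}C)$, $\mathfrak b=J_{S}\cdot (C\otimes _{k}C)$, one has $J_{C}=\mathfrak a+\mathfrak b$, and modulo $J_{C}^{3}$ the pieces $\mathfrak a^{i}\mathfrak b^{j}$ with $i+j\le 2$ are identified (after extension of scalars) with $J_{A}^{2}/J_{A}^{3}$, $\Omega _{1}(A)\otimes _{k}\Omega _{1}(S)$, $J_{S}^{2}/J_{S}^{3}$, $\Omega _{1}(A)$ and $\Omega _{1}(S)$; these reassemble into the three summands above via the exact sequence $0\to J_{A}^{2}/J_{A}^{3}\to\Omega _{2}(A)\to\Omega _{1}(A)\to 0$ and its analogue for $S$. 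The reason for requiring $S$ to be a polynomial ring is that then $\Omega _{1}(S)$ and $\Omega _{2}(S)$ are free $S$-modules by \cite[Prop. 2]{Nak2}, so $\Omega _{1}(A)\otimes _{k}\Omega _{1}(S)\cong\bigl(\Omega _{1}(A)\otimes _{A}(A\otimes _{k}S)\bigr)^{\oplus t}$ and $A\otimes _{k}\Omega _{2}(S)$ is a free $A\otimes _{k}S$-module.

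The projective-dimension count is then immediate. The ring map $A\to A\otimes _{k}S=A[y_{1},\dots ,y_{t}]$ is faithfully flat, so for any finitely generated $A$-module $M$ one has $pd_{A\otimes _{k}S}\bigl(M\otimes _{A}(A\otimes _{k}S)\bigr)=pd_{A}(M)$. Hence the first summand has projective dimension $pd_{A}(\Omega _{2}(A))\le 1$ by hypothesis, the third is free, and the middle summand has projective dimension $pd_{A}(\Omega _{1}(A))$. It remains to prove $pd_{A}(\Omega _{1}(A))\le 1$, and this is where the hypotheses ``$\dim R/I=s-m$'' and the reducedness of the affine algebra $R/I$ enter: for a reduced complete intersection over a field of characteristic zero the conormal sequence $0\to I/I^{2}\to\Omega _{1}(R)\otimes _{R}A\to\Omega _{1}(A)\to 0$ is exact, and since $I/I^{2}\cong A^{m}$ and $\Omega _{1}(R)\otimes _{R}A\cong A^{s}$ are free, this is a free resolution of $\Omega _{1}(A)$ of length $1$. (Left exactness holds because the map $I/I^{2}\to\Omega _{1}(R)\otimes _{R}A$ is injective on the dense open locus where $A$ is smooth over $k$, while $A^{m}$, $A$ being Cohen--Macaulay, has no associated primes outside that locus.) Since the projective dimension of a finite direct sum is the maximum of those of the summands, $pd(\Omega _{2}(A\otimes _{k}S))\le\max\{1,1,0\}=1$.

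I expect the genuine work to be the first step: establishing the decomposition of $\Omega _{2}(A\otimes _{k}S)$ as a module over $A\otimes _{k}S$ (not merely over $k$), because the defining identities for second-order derivations are weaker than the Leibniz rule, so unlike the first-order case one cannot simply split off the ``mixed'' term by hand and must argue with the $J_{C}$-adic filtration as above. A secondary point worth flagging in the write-up is that the conclusion genuinely uses $pd_{A}(\Omega _{1}(A))\le 1$, which fails when $A$ is not reduced (for instance $A=k[x]/(x^{2})$, where already $pd_{A}(\Omega _{1}(A))=\infty$), so the reducedness implicit in ``affine $k$-algebra'' cannot be dropped.
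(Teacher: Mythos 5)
First, a point about the comparison you were asked for: the paper does not prove this proposition at all. It is quoted from \cite{Er4} and immediately followed by the remark that the analogous statement fails for $n=3$, so there is no in-paper proof to measure you against and your argument has to stand on its own merits.

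On its own merits the plan is sound and, in my judgement, correct. The decomposition you state is true, but the cleanest way to prove it is not the simultaneous $\mathfrak a,\mathfrak b$ bookkeeping you sketch; do it one variable at a time. For $C=A[y]$ write $C\otimes_kC\cong(A\otimes_kA)[s,u]$ with $s=y\otimes1$ and $u=1\otimes y-y\otimes1$; then $J_C=J_A(C\otimes C)+u(C\otimes C)$, the decomposition by powers of $u$ is preserved by the left $C$-action, and reading off the coefficient of $u^j$ in $J_C/J_C^{3}$ gives $\Omega_2(A[y])\cong(\Omega_2(A)\oplus\Omega_1(A)\oplus A^{2})\otimes_AA[y]$ (using $(A\otimes_kA)/J_A^{m}\cong A\oplus J_A/J_A^{m}$ as left $A$-modules); iterating $t$ times yields exactly your formula. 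One caution about your fallback remark: if you retain only the five-step $J_C$-adic filtration with graded pieces $(J_A^2/J_A^3)\otimes_kS$, $\Omega_1(A)\otimes_k\Omega_1(S)$, $\Omega_1(A)\otimes_kS$, etc., the argument breaks, because the hypotheses control $pd(\Omega_2(A))$ but say nothing about $pd(J_A^{2}/J_A^{3})$; you genuinely need $\Omega_2(A)\otimes_kS$ to occur as a single subquotient (or summand, as above), so the ``reassembly'' is not optional. The remaining steps check out: $\dim R/I=s-m$ forces $ht(I)=m$, so $f_1,\dots,f_m$ is a regular sequence in the Cohen--Macaulay ring $R$ and $I/I^{2}\cong(R/I)^{m}$; left exactness of the conormal sequence via your torsion argument (the kernel sits inside the free module $I/I^2$, whose associated primes are the minimal primes of the Cohen--Macaulay ring $A$, and vanishes there by generic smoothness in characteristic zero) gives $pd(\Omega_1(A))\le1$; and faithfully flat base change along $A\to A[y_1,\dots,y_t]$ preserves projective dimension of finitely generated modules. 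Your caveat on reducedness is also right, and in fact sharper than you state: for $A=k[x]/(x^{2})$ one computes that $\Omega_2(A)$ is free of rank one, so all stated hypotheses hold while $pd(\Omega_1(A))=\infty$, i.e.\ the proposition is genuinely false if ``affine $k$-algebra'' is not read as reduced. Finally, your decomposition also explains the paper's remark that the statement fails for $n=3$: $\Omega_n(A\otimes_kS)$ contains $\Omega_j(A)\otimes_A(A\otimes_kS)$ as a summand for every $1\le j\le n$, and for $n=3$ nothing in the hypotheses bounds $pd(\Omega_2(A))$, whereas for $n=2$ the only extra summand is handled by the complete-intersection argument.
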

But, unfortunately, this result is not true even for $n=3$. \ \ \
So, there are two natural questions arise from these results. Can
we generalize the dimension of the ring $R$? Can we generalize the
dimension of the universal module $\Omega_{n}$? In \cite{Er1} and
\cite{Mel1}, it is studied the following question:

\begin{center}
When is the Betti series of a universal module of second order derivations
rational?
\end{center}
Our goal is to establish a result analogue of this question for
$n$th order universal differential operator modules.

\newpage
\section{Main Results}

\begin{proposition}
Let $k[x_1,x_2,\ldots, x_{s}]$ be a polynomial algebra and $m$ be a maximal
ideal of $k[x_1,x_2,\ldots, x_{s}]$ containing an irreducible element $f$.
If the elements $d_{n}(x_1^{\alpha_1}x_2^{\alpha_2} \ldots
x_{s}^{\alpha_{s}}f)$ belong to $m\Omega_{n}(k[x_1,x_2,\ldots, x_{s}]) $
whenever $0\leq \alpha_1+ \alpha_2 + \ldots + \alpha_{s} \leq n-1$, then $%
\Omega_{n}(\frac{k[x_1,x_2,\ldots, x_{s}]}{(f)})_{\overset{-}{m}}$ admits a
minimal resolution of $(\frac{k[x_1,x_2,\ldots, x_{s}]}{(f)})_{\overset{-}{m}%
}-$modules where $\overset{-}{m}=m/(f)$ is a maximal ideal of $\frac{%
k[x_1,x_2,\ldots, x_{s}]}{(f)}$.
\end{proposition}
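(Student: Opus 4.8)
The plan is to build a minimal free resolution of $\Omega_{n}(R/(f))_{\overline{m}}$ by modifying a chosen resolution of $\Omega_{n}(k[x_1,\ldots,x_s])_m$ — which is easy to produce since, by \cite[Prop. 2]{Nak2}, $\Omega_{n}(k[x_1,\ldots,x_s])$ is free and hence, after localizing at $m$ (using $\Omega_n(R)\otimes_R R_S\cong\Omega_n(R_S)$ from \cite[Theo. 9]{Nak2}), it is a finite free module over the regular local ring $P:=k[x_1,\ldots,x_s]_m$. The first step is to relate $\Omega_{n}(R/(f))$ to $\Omega_{n}(k[x_1,\ldots,x_s])$ via the standard second fundamental exact sequence for higher-order universal modules: there is a surjection $\Omega_{n}(P)\otimes_P (P/(f)) \twoheadrightarrow \Omega_{n}(P/(f))$ whose kernel is generated by the images of $d_n(x_1^{\alpha_1}\cdots x_s^{\alpha_s}f)$ for $0\le \alpha_1+\cdots+\alpha_s\le n-1$ (the elements $x^\alpha f$ with total degree at most $n-1$ span, modulo higher powers, the relevant piece of the defining ideal, and $d_n$ of them generate the submodule to be killed). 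I would state and cite this presentation carefully; it is the higher-order analogue of the conormal/Jacobian sequence and should be available in the references on universal modules.

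Second, I would invoke the hypothesis: each $d_n(x^\alpha f)$ lies in $m\,\Omega_{n}(P)$ (the hypothesis is stated over the polynomial ring, and localizing preserves the containment in $m\Omega_n(P_m)$). This is exactly the condition needed to guarantee that the generators of the kernel submodule $N$ of $\Omega_n(P)\otimes_P (P/(f))$ sit inside $\overline{m}\cdot\big(\Omega_n(P)\otimes_P P/(f)\big)$. Concretely: pick a free presentation $P^a \xrightarrow{} P^b \twoheadrightarrow \Omega_n(P)_m$ which, by freeness of $\Omega_n(P)_m$, we may take with $a=0$, so $\Omega_n(P)_m\cong P_m^b$; then $\Omega_n(P/(f))_{\overline m}\cong (P_m/(f))^b / N$ where $N$ is generated by the (finitely many) column vectors representing the $d_n(x^\alpha f)$, and by hypothesis every entry of every such vector lies in $m$. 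Lifting the generators of $N$ to a matrix $\Phi$ over $P_m/(f)$ with all entries in $\overline m$ gives the start of a resolution $(P_m/(f))^c \xrightarrow{\Phi} (P_m/(f))^b \twoheadrightarrow \Omega_n(P/(f))_{\overline m} \to 0$ that is minimal at the first step.

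Third, I would extend this to a full minimal resolution. Over the local ring $\overline P:=P_m/(f)$ one can always choose a minimal free resolution of any finitely generated module by the standard construction (repeatedly take a minimal generating set of the kernel; minimality of the $\partial_i$, i.e. $\partial_i(F_i)\subseteq \overline m F_{i-1}$, is automatic from Nakayama once one chooses minimal generating sets at each stage). The only subtlety worth remarking on is that this construction needs $\overline P$ Noetherian and the module finitely generated, both of which hold here ($R/(f)$ is an affine $k$-algebra, so $\overline P$ is Noetherian local, and $\Omega_n$ of a finitely generated $k$-algebra is finitely generated). So the real content of the proposition is precisely the first-step minimality, and that is where the hypothesis on the $d_n(x^\alpha f)$ does all the work.

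The main obstacle, as I see it, is pinning down the kernel of $\Omega_{n}(P)\otimes_P (P/(f)) \twoheadrightarrow \Omega_{n}(P/(f))$ and verifying that it is generated exactly by the $d_n(x^\alpha f)$ with $0\le \sum\alpha_i \le n-1$ — for $n=1$ this is classical (the conormal sequence, kernel generated by $df$), but for higher $n$ one must track how $I^{n+1}$ behaves when passing from $R\otimes_k R$ to $(R/(f))\otimes_k(R/(f))$, and argue that the finitely many monomials $x^\alpha f$ of low degree suffice to generate. Once that presentation is in hand, the rest is a routine application of Nakayama's lemma and the hypothesis.
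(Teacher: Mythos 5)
Your first two steps coincide with the paper's own argument: the presentation you are after is exactly Nakai's exact sequence $0\rightarrow \frac{N+f\Omega_{n}(S)}{f\Omega_{n}(S)}\rightarrow \frac{\Omega_{n}(S)}{f\Omega_{n}(S)}\rightarrow \Omega_{n}(R)\rightarrow 0$ (with $S=k[x_{1},\ldots ,x_{s}]$ and $N$ generated by the $d_{n}(g)$, $g\in fS$), which the paper cites from \cite{Nak2} and then localizes at $\overline{m}$; the reduction of the generators to the classes of $d_{n}(x^{\alpha }f)$ with $0\leq |\alpha |\leq n-1$, which you flag as the main obstacle, is handled there by a Leibniz-type expansion of $d_{n}(x^{\alpha }f)$ for arbitrary $\alpha$, and the hypothesis then places the kernel inside $\overline{m}\,(\Omega_{n}(S)/f\Omega_{n}(S))_{\overline{m}}$, just as in your second step. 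The genuine gap is in your third step. You assert that ``the real content of the proposition is precisely the first-step minimality'' and then complete the resolution by the generic construction over a Noetherian local ring. That construction exists for every finitely generated module with no hypothesis whatsoever (and first-step minimality is likewise automatic if one simply takes a minimal generating set of $\Omega_{n}(R)_{\overline{m}}$), so on this reading the proposition becomes vacuous; more importantly, it is not what the proposition is used for. The theorem that follows deduces rationality of the Betti series from the fact that the localized short exact sequence \emph{itself} is the entire minimal resolution, i.e.\ that the resolution stops after one step; a possibly infinite minimal resolution gives no control on $B(\Omega_{n}(R_{\overline{m}}),t)$.

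The missing idea is the paper's Step 4: the kernel $\bigl(\frac{N+f\Omega_{n}(S)}{f\Omega_{n}(S)}\bigr)_{\overline{m}}$ is a \emph{free} $R_{\overline{m}}$-module. It is generated by the $\binom{n+s-1}{s}$ classes of $d_{n}(x^{\alpha }f)$, $0\leq |\alpha |\leq n-1$; tensoring the localized sequence with the fraction field $K$ of $R_{\overline{m}}$ (this is where the irreducibility of $f$ enters, making $R_{\overline{m}}$ a domain --- a hypothesis your argument never uses, which is a symptom of the missing step) and using $K\otimes_{R_{\overline{m}}}\Omega_{n}(R_{\overline{m}})\cong \Omega_{n}(K)$ with $\dim_{K}\Omega_{n}(K)=\binom{n+s-1}{s-1}-1$, one finds its rank to be $\bigl(\binom{n+s}{s}-1\bigr)-\bigl(\binom{n+s-1}{s-1}-1\bigr)=\binom{n+s-1}{s}$, equal to its minimal number of generators, hence it is free. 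Combined with your containment in $\overline{m}$ times the middle term, this exhibits a finite minimal resolution of length one, so $Ext^{i}(\Omega_{n}(R_{\overline{m}}),R_{\overline{m}}/\overline{m})$ vanishes for $i\geq 2$ and the Betti series is a polynomial; without an argument of this kind your proposal does not establish the proposition in the sense in which the paper needs it.
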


\begin{proof}
Let $R=S/I=\frac{k[x_1,x_2,\ldots, x_{s}]}{(f)} $ and $\overset{-}{m}$ be a
maximal ideal of $R$. Then by \cite[Theo. 14 pg. 24]{Nak2} we have the
following short exact sequence of $R$-modules

\begin{equation}  \label{seq}
\xymatrix{0 \ar[r] & \frac{N+f\Omega_{n}(S)}{f\Omega_{n}(S)} \ar[r] &
\frac{\Omega_{n}(S)}{f\Omega_{n}(S)} \ar[r]^{\alpha} & \Omega_{n}(R) \ar[r]
& 0 }
\end{equation}
where $N$ is a submodule of $\Omega_{n}(S)$ generated by the elements of the
form

\begin{center}
$\{d_{n}(g): g \in fk[x_1,x_2,\ldots, x_{s}]\}$.
\end{center}
By localizing (\ref{seq}) at $\overset{-}{m}$, we get the
following exact sequence of $R_{\overset{-}{m}}-$modules:

\begin{equation}  \label{seq1}
\xymatrix{0 \ar[r] &
(\frac{N+f\Omega_{n}(S)}{f\Omega_{n}(S)})_{\overset{-}{m}} \ar[r] &
\frac{\Omega_{n}(S)}{f\Omega_{n}(S)}_{\overset{-}{m}}
\ar[r]^{\alpha_{\overset{-}{m}}} & \Omega_{n}(R)_{\overset{-}{m}} \ar[r] & 0
}.
\end{equation}

\bigskip

\textbf{Step 1.} A module generated by the set $\{d_{n}(g): g \in
f k[x_1,x_2,\ldots, x_{s}]\}$ is a submodule of
$m\Omega_{n}(k[x_1,x_2,\ldots, x_{s}])$.

\bigskip

\textit{Proof of Step 1}. Since $d_{n}$ is $k$-linear, it suffices to show
\begin{center}
$d_{n}(x_1^{\alpha_1}x_2^{\alpha_2} \ldots x_{s}^{\alpha_{s}}f)\in
m\Omega_{n}(k[x_1,x_2,\ldots, x_{s}])$.
\end{center}
By using the properties of $d_{n}$, we get

$d_{n}(x_1^{\alpha_1}x_2^{\alpha_2} \ldots x_{s}^{\alpha_{s}}f)=\underset{%
\gamma}{\Sigma}a_{\gamma}(x_1,x_2,\ldots,
x_{s})d_{n}(x_1^{\gamma_1}x_2^{\gamma_2} \ldots
x_{s}^{\gamma_{s}}f)$ $+f(\underset{\beta}{\Sigma}a^{\prime
}_{\beta}(x_1,x_2,\ldots, x_{s})d_{n}(x_1^{\beta_1}x_2^{\beta_2}
\ldots x_{s}^{\beta_{s}})$ where $a_{\gamma}(x_1,x_2,\ldots,
x_{s}), a^{\prime }_{\beta}(x_1,x_2,\ldots, x_{s})\in
k[x_1,x_2,\ldots, x_{s}] $, $0\leq \gamma_1+\gamma_2+\ldots
+\gamma_{s}\leq n-1$, $0< \beta_1+ \beta_2+\ldots + \beta_{s}\leq
n $.
By assumption, we know
\begin{center}
$d_{n}(x_1^{\gamma_1}x_2^{\gamma_2} \ldots x_{s}^{\gamma_{s}}f)
\in m\Omega_{n}(k[x_1,x_2,\ldots, x_{s}])$
\end{center}
whenever $0\leq \gamma_1+ \gamma_2 + \ldots + \gamma_{s} \leq n-1$
and $f\in m$, then the result follows.

$\noindent $

\bigskip

\textbf{Step 2.} $(\frac{N+f\Omega_{n}(S)}{f\Omega_{n}(S)})_{\overset{-}{m}%
}\subseteq \overset{-}{m}(\frac{\Omega_{n}(S)}{f\Omega_{n}(S)})_{\overset{-}{%
m}}$.

\bigskip

\textit{Proof of Step 2}. By step 1, we know $N\subseteq m\Omega_{n}(S)$ and
the rest is clear.

\bigskip

\textbf{Step 3.} $(\frac{N+f\Omega_{n}(S)}{f\Omega_{n}(S)})_{\overset{-}{m}}$
is generated by $\binom{n+s-1}{s}$ elements.

\bigskip

\textit{Proof of Step 3}. It is known that $(\frac{N+f\Omega_{n}(S)}{%
f\Omega_{n}(S)})$ is generated by the set
\begin{center}
$\{d_{n}(x_1^{\alpha_1}x_2^{\alpha_2} \ldots x_{s}^{\alpha_{s}}f)+
f\Omega_{n}(S): 0\leq \alpha_1+ \alpha_2+ \ldots + \alpha_{s}\leq
n-1\}$.
\end{center}
And, it has $\binom{n+s-1}{s}$ elements.
\bigskip

\textbf{Step 4.} $(\frac{N+f\Omega_{n}(S)}{f\Omega_{n}(S)})_{\overset{-}{m}}$
is a free $R_{\overset{-}{m}}$-module.

\bigskip

\textit{Proof of Step 4}. The Krull dimension of $R_{\overset{-}{m}}$ is $s-1
$ and let $K$ be the field of fractions of $R_{\overset{-}{m}}$. Then by
tensoring the exact sequence in (\ref{seq1}) by $K$, we get

\begin{equation}  \label{seq2}
\xymatrix{0 \ar[r] & K
\otimes_{R_{\overset{-}{m}}}(\frac{N+f\Omega_{n}(S)}{f\Omega_{n}(S)})_{%
\overset{-}{m}} \ar[r] & K \otimes_{R_{\overset{-}{m}}}
(\frac{\Omega_{n}(S)}{f\Omega_{n}(S)})_{\overset{-}{m}}
\ar[r]^{\alpha_{\overset{-}{m}}} & K \otimes_{R_{\overset{-}{m}}}
\Omega_{n}(R)_{\overset{-}{m}} \ar[r] & 0 }.
\end{equation}
We know that,
$(\frac{\Omega_{n}(S)}{f\Omega_{n}(S)})_{\overset{-}{m}}$ is a
free $R_{\overset{-}{m}}$- module of rank $\binom{n+s}{s}-1$.
By
using the following isomorphism,
\begin{center}
$K\otimes_{R_{\overset{-}{m}}}\Omega_{n}({R_{\overset{-}{m}}})\cong
\Omega_{n}(K)$
\end{center}
we have

$dim$ $K \otimes_{R_{\overset{-}{m}}}(\frac{N+f\Omega_{n}(S)}{f\Omega_{n}(S)}%
)_{\overset{-}{m}}= $dim$ $K$ \otimes_{R_{\overset{-}{m}}} (\frac{\Omega_{n}(S)}{%
f\Omega_{n}(S)})_{\overset{-}{m}}-dim \Omega_{n}(K)$
$=\binom{n+s}{s}-\binom{n+s-1}{s-1}=\binom{n+s-1}{s}$.
Since the
rank of
$(\frac{N+f\Omega_{n}(S)}{f\Omega_{n}(S)})_{\overset{-}{m}}
$ is equal to its number of minimal generators, it is a free $R_{\overset{-}{%
m}}$-module. Therefore, the short exact sequence given in (\ref{seq1}) is a
minimal resolution for $\Omega_{n}({R_{\overset{-}{m}}})$.
\end{proof}

\bigskip
Let $R$ be a finitely generated regular $k$-algebra and $m$ be a
maximal ideal of R then $\Omega_{n}({R_{m}})$ is a free
$R_{m}$-module. Then it is clear that $B(\Omega_{n}({R_{m}},t))$
is rational.

\bigskip

\begin{theorem}
Let $k[x_1,x_2,\ldots, x_{s}]$ be a polynomial algebra and $m$ be
a maximal ideal of $k[x_1,x_2,\ldots, x_{s}]$ containing an
irreducible element $f$. Let $d_{n}(x_1^{\alpha_1}x_2^{\alpha_2}
\ldots x_{s}^{\alpha_{s}}f)\in m\Omega_{n}(k[x_1,x_2,\ldots,
x_{s}])$
for $0\leq \alpha_1+ \alpha_2 + \ldots + \alpha_{s} \leq n-1$. Assume that $%
R=\frac{k[x_1,x_2,\ldots, x_{s}]}{(f)}$ is not a regular ring at $\overset{-}%
{m}=m/(f)$. Then $B(\Omega_{n}({R_{\overset{-}{m}}},t),)$ is a rational
function.
\end{theorem}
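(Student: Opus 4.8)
The plan is to read everything off the Proposition above, whose hypotheses are satisfied here. By that Proposition the localized short exact sequence $(\ref{seq1})$,
\begin{equation*}
0 \rightarrow \Bigl(\frac{N+f\Omega_{n}(S)}{f\Omega_{n}(S)}\Bigr)_{\overline{m}} \rightarrow \Bigl(\frac{\Omega_{n}(S)}{f\Omega_{n}(S)}\Bigr)_{\overline{m}} \overset{\alpha_{\overline{m}}}{\longrightarrow} \Omega_{n}(R_{\overline{m}}) \rightarrow 0 ,
\end{equation*}
is a \emph{minimal} free resolution of $\Omega_{n}(R_{\overline{m}})$ over $R_{\overline{m}}$, in which $F_{0}=(\Omega_{n}(S)/f\Omega_{n}(S))_{\overline{m}}$ is free of rank $\binom{n+s}{s}-1$ and $F_{1}=((N+f\Omega_{n}(S))/f\Omega_{n}(S))_{\overline{m}}$ is free of rank $\binom{n+s-1}{s}$ (Steps~3 and~4 of that proof). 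In particular $\mathrm{pd}_{R_{\overline{m}}}\Omega_{n}(R_{\overline{m}})\leq 1$.

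First I would record that this minimal resolution has length $1$, so all higher free modules vanish. Since the residue field of $R_{\overline{m}}$ is $k$ and the Betti numbers $\beta_{i}=\dim_{k}\operatorname{Ext}^{i}_{R_{\overline{m}}}(\Omega_{n}(R_{\overline{m}}),k)$ are the ranks of the free modules in a minimal resolution, we obtain $\beta_{i}=0$ for all $i\geq 2$, $\beta_{0}=\operatorname{rank}F_{0}=\binom{n+s}{s}-1$, and $\beta_{1}=\operatorname{rank}F_{1}=\binom{n+s-1}{s}$. Hence
\begin{equation*}
B(\Omega_{n}(R_{\overline{m}}),t)=\Bigl(\binom{n+s}{s}-1\Bigr)+\binom{n+s-1}{s}\,t ,
\end{equation*}
which is a polynomial in $t$ and so, trivially, a rational function; this proves the theorem. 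I would moreover note that $\binom{n+s-1}{s}>0$ for every $n\geq 1$, so $F_{1}\neq 0$, whence by the Lemma $\operatorname{Ext}^{1}(\Omega_{n}(R_{\overline{m}}),k)\neq 0$ and $\Omega_{n}(R_{\overline{m}})$ is not free --- in accordance with $R$ being singular at $\overline{m}$ (indeed the hypotheses already force this non-regularity, a free module having no minimal free resolution of positive length).

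Once the Proposition is available there is essentially no obstacle left; the only points requiring a little care are routine. One is to check that the Betti numbers of $\Omega_{n}(R_{\overline{m}})$ can be read directly from $(\ref{seq1})$, i.e. that applying $\operatorname{Hom}_{R_{\overline{m}}}(-,k)$ to a minimal resolution kills every differential. The other is the binomial bookkeeping behind $\operatorname{rank}F_{1}$: tensoring $(\ref{seq1})$ with the fraction field $K$ of $R_{\overline{m}}$ and using $\Omega_{n}(R_{\overline{m}})\otimes_{R_{\overline{m}}}K\cong\Omega_{n}(K)$ together with $\dim_{K}\Omega_{n}(K)=\binom{n+s-1}{s-1}-1$ (as $K$ has transcendence degree $s-1$ over $k$) gives $\operatorname{rank}F_{1}=\bigl(\binom{n+s}{s}-1\bigr)-\bigl(\binom{n+s-1}{s-1}-1\bigr)=\binom{n+s-1}{s}$ by Pascal's identity --- exactly the computation of Step~4.
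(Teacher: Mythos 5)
Your proposal is correct and follows essentially the same route as the paper: invoke the preceding Proposition to get that the localized sequence (\ref{seq1}) is a minimal free resolution of length one, so the Betti numbers are just the ranks $\binom{n+s}{s}-1$ and $\binom{n+s-1}{s}$ and vanish in higher degrees, making $B(\Omega_{n}(R_{\overline{m}}),t)$ a polynomial, hence rational. The paper's own proof is merely the one-line citation of the Proposition; your version supplies the routine details (minimality killing the differentials under $\operatorname{Hom}(-,k)$, and the rank bookkeeping) that the authors leave implicit.
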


\bigskip

\begin{proof}
By the previous proposition, the exact sequence of $R_{\overset{-}{m}}$%
-modules in (\ref{seq1}) is a minimal resolution of $\Omega_{n}({R_{\overset{%
-}{m}}})$. And we get the result.
\end{proof}

\bigskip

\textbf{Question:} It should be interesting to know whether the
Betti Series is rational for the algebra $R=k[x,y,z]$ with
$z^2=x^3$ and $y^2=xz$.

\bigskip

\textbf{Acknowledgement}

This work is a part of the first autor's PhD thesis. 
The first author is grateful to TUBITAK for their financial
support during her visit at the University of Sheffield.

\bigskip
Ali Erdogan \\
Hacettepe University \\
Department of Mathematics \\
Ankara,Turkey \\
e-mail:alier@hacettepe.edu.tr

\bigskip
Halise Melis Akcin \\
Hacettepe University \\
Department of Mathematics \\
Ankara,Turkey \\
e-mail:hmtekin@hacettepe.edu.tr

\newpage

\end{document}